\begin{document}

\newtheorem{thm}{Theorem}[section]
\newtheorem{cor}[thm]{Corollary}
\newtheorem{lem}[thm]{Lemma}
\newtheorem{prop}[thm]{Proposition}
\theoremstyle{definition}
\newtheorem{defn}[thm]{Definition}

\newcommand{\bm}[1]{\mbox{\boldmath $#1$}}
\newcommand{\ceil}[1]{\left\lceil #1 \right\rceil}
\newcommand{\floor}[1]{\left\lfloor #1 \right\rfloor}
\newcommand{\C}{\ensuremath C}
\newcommand{\e}{\ensuremath e}
\renewcommand{\l}{\ensuremath\lambda}
\newcommand{\peq}{\preceq}

\title{Optimizing Linear Extensions}

\author{Bridget Eileen Tenner}
\email{bridget@math.depaul.edu}
\address{Department of Mathematical Sciences, DePaul University, Chicago, Illinois 60614}

\subjclass[2000]{Primary 06A07; Secondary 05A99, 06A05}
\keywords{poset, linear extension, optimization}

\begin{abstract}
The minimum number of elements needed for a poset to have exactly $n$ linear extensions is at most $2\sqrt{n}$.  In a special case, the bound can be improved to $\sqrt{n}$.
\end{abstract}

\maketitle

\section{Introduction and definitions}

A partially ordered set, or \emph{poset}, $P = (X, \peq)$ consists of a set $X$ together with a partial ordering $\peq$ on $X$.  For background on these structures, the reader is encouraged to review \cite{ec1} and \cite{trotter}.

One statistic that can hint at how much information is missing in a partial ordering is based on the following definition.

\begin{defn}
A \emph{linear extension} of a poset $P = (X, \peq)$ is a total ordering of the elements of $X$ that is compatible with $\peq$.  The number of linear extensions of $P$ is denoted $\e(P)$.
\end{defn}

As suggested in \cite{ec1}, the number of linear extensions of a poset gives an indication of the intricacy of the original partial ordering.  Thus understanding the function $\e$ can provide some insight into the complexity of the structure of partial orderings.

Another poset statistic, the number of order ideals in a poset, is considered in \cite{rt}, and a bound is given for the minimal number of elements needed to have a particular number of order ideals.  Here, the analogous question is answered for the function $\e$.

\begin{defn}
The \emph{size} of a poset $P = (X, \peq)$, denoted $|P|$, is the cardinality of $|X|$.
\end{defn}

\begin{defn}
For any integer $n \ge 1$, set $\l(n) = \min\{|P| : e(P) = n\}$.
\end{defn}

The main result of this work, Theorem~\ref{thm:main}, is the bound
$$\l(n) \le 2\sqrt{n}.$$
In a certain case, as discussed in Section~\ref{section:special cases}, this bound can be improved further to $\sqrt{n}$.  As displayed in Table~\ref{table:examples}, there are values of $n$ for which $\l(n)$ equals $2\sqrt{n}$.

In the next section, the values of $\l(n)$ for small $n$ are given, together with examples of the posets that obtain them.  Furthermore, the poset operations that give the primary tools for proving Theorem~\ref{thm:main} are stated.  Section~\ref{section:main} consists of the main result, and a special case is treated in the last section.

\section{Examples and arithmetic of poset operations}

Before describing how basic poset operations affect the function $\l$, it is instructive to calculate $\l(n)$ for some small values of $n$, and to view the posets that give these values.  These examples appear in Table~\ref{table:examples}, and as sequence A160371 in \cite{oeis}.

\begin{table}[htbp]
\begin{center}
$\begin{array}{c|c|c|c|c|c|c|c|c|c|c|c|c}
\rule[-2mm]{0mm}{6mm}n & 1 & 2 & 3 & 4 & 5 & 6 & 7 & 8 & 9 & 10 & 11 & 12\\
\hline
\rule[-2mm]{0mm}{6mm}\l(n) & 0 & 2 & 3 & 4 & 4 & 3 & 5 & 4 & 5 & 5 & 5 & 4\\
\hline
\rule[-2mm]{0mm}{16mm}
\parbox[b]{.5in}{\begin{center}poset\\example\end{center}} &
\parbox[b]{.2in}{\begin{center} $\emptyset$ \end{center}} &
\begin{tikzpicture}[scale=.5] 
\foreach \x in {0,1} {\fill[black] (\x,0) circle (3pt);}
\end{tikzpicture} &
\begin{tikzpicture}[scale=.5] 
\draw (1,0) -- (1,1);
\foreach \x in {0,1} {\fill[black] (\x,0) circle (3pt);}
\fill[black] (1,1) circle (3pt);
\end{tikzpicture} &
\begin{tikzpicture}[scale=.5] 
\foreach \x in {0,1} {\fill[black] (\x,0) circle (3pt); \fill[black] (\x,1) circle (3pt); \draw (\x,0) -- (\x,1);}
\draw (0,0) -- (1,1);
\draw (0,1) -- (1,0);
\end{tikzpicture} &
\begin{tikzpicture}[scale=.5] 
\foreach \x in {0,1} {\fill[black] (\x,0) circle (3pt); \fill[black] (\x,1) circle (3pt); \draw (\x,0) -- (\x,1);}
\draw (0,1) -- (1,0);
\end{tikzpicture} &
\begin{tikzpicture}[scale=.5] 
\foreach \x in {0,.5,1} {\fill[black] (\x,0) circle (3pt);}
\end{tikzpicture} &
\begin{tikzpicture}[scale=.5] 
\foreach \x in {0,1} {\fill[black] (\x,0) circle (3pt); \fill[black] (\x,1) circle (3pt); \draw (\x,0) -- (\x,1);}
\draw (0,1) -- (1,0);
\draw (1,0) -- (1,-1); \fill[black] (1,-1) circle (3pt);
\end{tikzpicture} &
\begin{tikzpicture}[scale=.5] 
\foreach \x in {0,1,2} {\fill[black] (\x,0) circle (3pt);}
\fill[black] (.5,1) circle (3pt);
\draw (0,0) -- (.5,1); \draw (1,0) -- (.5,1);
\end{tikzpicture} &
\begin{tikzpicture}[scale=.5] 
\foreach \x in {0,1} {\fill[black] (\x,0) circle (3pt); \fill[black] (\x,1) circle (3pt); \draw (\x,0) -- (\x,1);}
\draw (0,1) -- (1,0);
\draw (0,0) -- (0,-1); \fill[black] (0,-1) circle (3pt);
\end{tikzpicture} &
\begin{tikzpicture}[scale=.5] 
\foreach \x in {0,1,2} {\fill[black] (\x,0) circle (3pt);}
\foreach \y in {1,2} {\fill[black] (.5,\y) circle (3pt);}
\draw (0,0) -- (.5,1); \draw (1,0) -- (.5,1);
\draw (.5,1) -- (.5,2);
\end{tikzpicture} &
\begin{tikzpicture}[scale=.5] 
\foreach \x in {0,1,2} {\fill[black] (\x,0) circle (3pt);}
\foreach \y in {-1,1} {\fill[black] (1,\y) circle (3pt);}
\draw (1,-1) -- (1,1);
\draw(0,0) -- (1,1); \draw (1,-1) -- (2,0);
\end{tikzpicture} &
\begin{tikzpicture}[scale=.5] 
\draw (1,0) -- (1,1);
\foreach \x in {0,1,2} {\fill[black] (\x,0) circle (3pt);}
\fill[black] (1,1) circle (3pt);
\end{tikzpicture}\\
\hline
\rule[-2mm]{0mm}{6mm}\floor{2\sqrt{n}} & 2 & 2 & 3 & 4 & 4 & 4 & 5 & 5 & 6 & 6 & 6 & 6
\end{array}$
\end{center}
\vspace{.1in}
\caption{The values of $\l(n)$ for $1 \le n \le 12$, together with demonstrative posets, and the upper bound of Theorem~\ref{thm:main}.}\label{table:examples}
\end{table}

Two elementary operations on posets are the \emph{direct sum} and the \emph{ordinal sum}.  Note that a poset which can be constructed entirely by these two operations is called \emph{series-parallel}.

\begin{defn}
Let $P$ and $Q$ be posets on the sets $X_P$ and $X_Q$, respectively, with order relations $\peq_P$ and $\peq_Q$, respectively.  The direct sum $P + Q$ is the poset defined on $X_P \cup X_Q$, with order relations $\peq_P \cup \peq_Q$.  The ordinal sum $P \oplus Q$ is the poset defined on $X_P \cup X_Q$, with order relations $\peq_P \cup \peq_Q \cup \{x_P \peq x_Q : x_P \in X_P, x_Q \in X_Q\}$.
\end{defn}

The next lemma follows immediately from the definitions.

\begin{lem}\label{lem:basic operations}
For posets $P$ and $Q$,
$$\e(P+Q) = \binom{|P|+|Q|}{|P|} \e(P) \e(Q)$$
and
$$\e(P \oplus Q) = \e(P) \e(Q).$$
\end{lem}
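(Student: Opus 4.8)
The plan is to prove both identities by exhibiting explicit bijections, handling the ordinal sum first since it is the cleaner case. In $P \oplus Q$ every element of $X_P$ lies below every element of $X_Q$, so any linear extension $L$ of $P \oplus Q$ must list all of $X_P$ before all of $X_Q$. Restricting $L$ to $X_P$ then yields a linear extension of $P$, and restricting to $X_Q$ yields a linear extension of $Q$; conversely, concatenating any linear extension of $P$ followed by any linear extension of $Q$ respects $\peq_P \cup \peq_Q \cup \{x_P \peq x_Q : x_P \in X_P,\, x_Q \in X_Q\}$ and hence is a linear extension of $P \oplus Q$. These two maps are mutually inverse, giving $\e(P \oplus Q) = \e(P)\e(Q)$.

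For the direct sum, the key point is that $\peq_{P+Q}$ imposes no relations between $X_P$ and $X_Q$, so a linear extension of $P+Q$ is exactly an interleaving of a linear extension of $P$ with one of $Q$. Concretely I would set up a bijection between linear extensions of $P+Q$ and triples $(S, L_P, L_Q)$, where $S \subseteq \{1, \dots, |P|+|Q|\}$ has size $|P|$, $L_P$ is a linear extension of $P$, and $L_Q$ is a linear extension of $Q$: given such a triple, place the elements of $X_P$ into the positions indexed by $S$ in the order prescribed by $L_P$, and the elements of $X_Q$ into the remaining positions in the order prescribed by $L_Q$. One then checks that this total order extends $\peq_P \cup \peq_Q$, that the construction is injective, and that every linear extension of $P+Q$ arises in this way, by reading off which positions carry $X_P$-elements and restricting the order to $X_P$ and to $X_Q$. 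Since there are $\binom{|P|+|Q|}{|P|}$ choices for $S$, this gives $\e(P+Q) = \binom{|P|+|Q|}{|P|}\e(P)\e(Q)$.

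The only step requiring any care — and it is minor, which is why the lemma is billed as immediate — is verifying that the interleaved order in the direct sum case really is a linear extension: one must confirm that whenever $x \peq_P y$ (or $x \peq_Q y$), the position assigned to $x$ precedes that assigned to $y$. This holds because $L_P$ (respectively $L_Q$) is itself a linear extension and the positions in $S$ (respectively its complement) are filled in that same relative order; no compatibility across $X_P$ and $X_Q$ needs to be checked, precisely because $P+Q$ introduces no such relations. With that observation in place, both identities follow.
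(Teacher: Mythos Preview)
Your argument is correct: the bijections you describe are the standard ones, and the verification you flag (that the interleaved order respects $\preceq_P$ and $\preceq_Q$) is exactly the point that needs checking. The paper itself offers no proof at all, declaring only that the lemma ``follows immediately from the definitions''; your write-up is precisely the natural way to unpack that claim.
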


\begin{defn}
For any $\ell \ge 0$, let the poset $\C_\ell$ be the chain of $\ell$ elements, where $\C_0 = \emptyset$.
\end{defn}

Certainly the poset $\C_\ell$ is already a total ordering, so $\l(\C_\ell) = 1$ for all $\ell$.  Moreover, it follows from the identities of Lemma~\ref{lem:basic operations} that
$$\e(P + \C_\ell) = \binom{|P| + \ell}{|P|} \e(P)$$
and
\begin{equation}
\e(P \oplus \C_\ell) = \e(P) \label{eqn:chain ordinal sum}
\end{equation}
for all $\ell \ge 0$.  Equation~\eqref{eqn:chain ordinal sum} implies that a poset with $n$ linear extensions can have arbitrarily large size.  Perhaps unexpectedly, equation~\eqref{eqn:chain ordinal sum} will be very helpful in bounding $\l(n)$.  The key is to employ it as in the following result.

\begin{prop}\label{prop:factoring}
For all $\ell \ge 0$, $\e\left((P \oplus \C_\ell) + \C_1\right) = (|P| + \ell+1)\e(P).$
\end{prop}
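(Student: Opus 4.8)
The plan is to apply Lemma~\ref{lem:basic operations} directly, with the first summand taken to be $P \oplus \C_\ell$ and the second taken to be $\C_1$. First I would record the two pieces of data the direct-sum formula needs: the size $|P \oplus \C_\ell| = |P| + \ell$, and the number of linear extensions $\e(P \oplus \C_\ell) = \e(P)$, the latter being precisely equation~\eqref{eqn:chain ordinal sum}. Since $\C_1$ is a single element, $|\C_1| = 1$ and $\e(\C_1) = 1$.

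Feeding these into the direct-sum identity of Lemma~\ref{lem:basic operations} gives
\[
\e\bigl((P \oplus \C_\ell) + \C_1\bigr) = \binom{(|P|+\ell)+1}{|P|+\ell}\,\e(P \oplus \C_\ell)\,\e(\C_1) = \binom{|P|+\ell+1}{|P|+\ell}\,\e(P),
\]
and the only arithmetic involved is that $\binom{m+1}{m} = m+1$ with $m = |P|+\ell$, which yields $(|P|+\ell+1)\e(P)$, as claimed. The order of the two summands is immaterial, since the direct sum is commutative up to isomorphism and the binomial coefficient $\binom{m+1}{m} = \binom{m+1}{1}$ is symmetric.

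There is no genuine obstacle here: the proposition is a bookkeeping consequence of Lemma~\ref{lem:basic operations} together with equation~\eqref{eqn:chain ordinal sum}. Its point is not difficulty but leverage — it shows that, starting from any poset $P$, one can multiply $\e(P)$ by the \emph{arbitrary} integer $|P|+\ell+1$ (any value $\ge |P|+1$) while enlarging the size by only $\ell+1$. Replacing the binomial blow-up of a bare direct sum with a chain by this clean linear factor is exactly what will drive the recursive construction in Section~\ref{section:main}.
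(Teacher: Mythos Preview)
Your proof is correct and is exactly the argument the paper has in mind: the proposition is stated without proof precisely because it follows immediately from Lemma~\ref{lem:basic operations} (the direct-sum formula) together with equation~\eqref{eqn:chain ordinal sum}, which is the computation you carry out. There is nothing to add.
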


\begin{figure}[htbp]
\begin{center}
\begin{tikzpicture}[scale=.5]
\tikzstyle{block} = [rectangle, draw, text width=.75cm, text centered, rounded corners, minimum height=.75cm]
\draw (0,.75) coordinate -- (0,3) coordinate (b);
\draw (0,1.5) coordinate (a) -- (-.6,.75);
\draw (a) -- (.6, .75);
\node[block] at (0,0) (P) {$P$};
\fill[black] (a) circle (2pt);
\fill[black] (b) circle (2pt);
\fill[black] (2,0) circle (2pt);
\draw[<->] (-.5,1.5) -- (-.5,3);
\node[left] at (-.5,2.25) {$\ell$};
\end{tikzpicture}
\end{center}
\caption{The poset $(P \oplus \C_\ell) + \C_1$ described in Proposition~\ref{prop:factoring}.}\label{fig:factoring}
\end{figure}
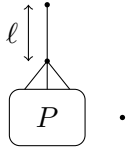

Proposition~\ref{prop:factoring} gives the following initial result for all $n$.

\begin{cor}\label{cor:factoring}
If $n = ab$ for $a,b \in \mathbb{Z}^+$ with $a < b$, then $\l(n) \le b$.
\end{cor}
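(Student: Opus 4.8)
The plan is to feed Proposition~\ref{prop:factoring} a carefully chosen base poset $P$, using the strict inequality $a<b$ to guarantee that there is enough ``room'' to pad with an ordinal chain summand. Since Proposition~\ref{prop:factoring} yields, for each $\ell\ge 0$, a poset of size $|P|+\ell+1$ with exactly $(|P|+\ell+1)\e(P)$ linear extensions, the idea is to arrange $\e(P)=a$ and then pick $\ell$ so that $|P|+\ell+1=b$; the number of linear extensions is then automatically $ab=n$ and the size is exactly $b$.

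First I would produce a poset $P$ with $\e(P)=a$ and $|P|\le b-1$. A convenient choice is $P=\C_{a-1}+\C_1$: by Lemma~\ref{lem:basic operations}, $\e(\C_{a-1}+\C_1)=\binom{a}{a-1}\e(\C_{a-1})\e(\C_1)=a$, while $|P|=a\le b-1$ because $a<b$. (For $a=1$ this is just a single point, which causes no trouble.) Next I would set $\ell=b-a-1$, which is nonnegative for exactly the same reason. Applying Proposition~\ref{prop:factoring} to this $P$ and this $\ell$ gives the poset $(P\oplus\C_\ell)+\C_1$, whose number of linear extensions is $(|P|+\ell+1)\e(P)=b\cdot a=n$ and whose size is $|P|+\ell+1=b$. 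Therefore $\l(n)\le b$.

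Once the base poset is in hand, the remaining arithmetic is immediate, so I do not expect a genuine obstacle. The only point that needs attention is the size bookkeeping: the construction requires $|P|\le b-1$ (equivalently $\ell\ge 0$), and the candidate base poset has size exactly $a$, which is at most $b-1$ precisely because the hypothesis gives $a<b$ rather than merely $a\le b$. In other words, the strict inequality is exactly what makes the padding by $\C_\ell$ possible, and it is the hinge of the argument.
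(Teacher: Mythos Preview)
Your proof is correct and follows essentially the same route as the paper: choose a base poset $P$ with $\e(P)=a$ and $|P|\le b-1$, then apply Proposition~\ref{prop:factoring} with $\ell=b-1-|P|$ to obtain a poset of size $b$ with $ab=n$ linear extensions. The only cosmetic difference is that the paper first records the bound $\l(a)\le a$ (via the very poset $\C_{a-1}+\C_1$ you chose) and then takes $P$ to be any minimal witness of size $\l(a)$, whereas you work directly with the explicit poset $\C_{a-1}+\C_1$; the resulting construction is identical.
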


\begin{proof}
First note that
\begin{equation}\label{eqn:initial bound}
\l(n) \le n
\end{equation}
for all $n \in \mathbb{Z}^+$, by considering the $n$-element poset $\C_{n-1} + \C_1$, which has $n$ linear extensions.

Let $P$ be a poset of size $\l(a)$, with $\e(P) = a$.  Since $a < b$, equation~\eqref{eqn:initial bound} implies $\l(a) < b$, and so $b - 1 - |P| \ge 0$.  Set $Q = (P \oplus \C_{b - 1 - |P|}) + \C_1$.  Then $|Q| = |P| + b - 1 - |P| + 1 = b$, and $\e(Q) = (|P| + b - 1 - |P| + 1) \e(P) = ab = n$.
\end{proof}


\section{Bounds}\label{section:main}

The proof of the main result, Theorem~\ref{thm:main}, begins with an analysis of the following $m$-element poset $Q_{i,j,m}$, where $1 \le i < j \le m-2$.  Note that $Q_{i,j,m}$ is not series-parallel.

\begin{center}
\begin{tikzpicture}[scale=.5]
\draw (0,2) coordinate (c_1) -- (0,9) coordinate (c_m);
\coordinate (c_i) at (0,4);
\coordinate (c_j) at (0,7);
\fill[black] (0,2) circle (2pt) node[left] {$c_1$};
\fill[black] (0,4) circle (2pt) node[left] {$c_i$};
\fill[black] (0,7) circle (2pt) node[left] {$c_j$};
\fill[black] (0,9) circle (2pt) node[left] {$c_{m-2}$};
\draw (-1,5) coordinate (a) -- (0,4);
\draw (1,6) coordinate (b) -- (0,7);
\fill[black] (-1,5) circle (2pt) node[left] {$a$};
\fill[black] (1,6) circle (2pt) node[right] {$b$};
\foreach \y in {3.2,5.7,8.2} {\draw (0,\y) node[left] {$\vdots$};}
\end{tikzpicture}
\end{center}

\noindent In any linear extension of $Q_{i,j,m}$, the elements $\{c_1,c_2, \ldots, c_{m-2}\}$ may appear in exactly one order.  The element $a$ can appear anywhere after $c_i$, while the element $b$ can appear anywhere before $c_j$.  The elements $a$ and $b$ are incomparable in $Q_{i,j,m}$, so they can appear in either order if they both appear between $c_k$ and $c_{k+1}$ in a linear extension.  Thus
$$\e(Q_{i,j,m}) = (m-1-i)j + (j-i) = (m-i)j-i,$$
and so
$$\l\left((m-i)j-i\right) \le m.$$

\begin{prop}\label{prop:linear bound}
For all integers $n \ge 1$ and $d \ge 1$, 
$$\l(n) \le \floor{n/d}+d.$$
\end{prop}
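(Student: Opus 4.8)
The plan is to produce, for each pair $(n,d)$, a poset on at most $\floor{n/d}+d$ elements with exactly $n$ linear extensions, taken either from the family $Q_{i,j,m}$ just introduced or, in degenerate ranges, produced by Corollary~\ref{cor:factoring} and inequality~\eqref{eqn:initial bound}. Write $q=\floor{n/d}$ and $n=qd+r$ with $0\le r\le d-1$. The case $d=1$ is immediate from \eqref{eqn:initial bound}, since $\l(n)\le n\le \floor{n/1}+1$; likewise the case $q=0$ (that is, $n<d$) is immediate, since then $\l(n)\le n<d=\floor{n/d}+d$. So assume $d\ge 2$ and $q\ge 1$.

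The heart of the argument is the case $1\le r\le q-1$. I would rewrite $n=(q+1)d-(d-r)$ and realize it as $\e(Q_{i,j,m})$ by choosing $j=d$ and $m-i=q+1$, so that $i=d-r$ and $m=q+d+1-r$. The requirements $1\le i<j\le m-2$ then reduce exactly to $1\le r\le d-1$ and $q\ge r+1$, both of which hold here; and $\e(Q_{d-r,\,d,\,q+d+1-r})=(q+1)d-(d-r)=n$, while $m=q+d+1-r\le q+d$ since $r\ge 1$. Hence $\l(n)\le q+d$.

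For $r=0$, that is $d\mid n$, I would appeal to Corollary~\ref{cor:factoring}: if $q\ne d$ then $n=qd$ is a product of two distinct positive integers, so $\l(n)\le\max(q,d)\le q+d$; if $q=d\ge 3$ the poset $Q_{d,\,d+1,\,2d}$ has $2d=q+d$ elements and $\e(Q_{d,d+1,2d})=d(d+1)-d=d^2=n$; and $q=d\le 2$ is checked directly.

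The remaining case is $r\ge 1$ with $q\le r$, so $q\le r\le d-1$ and $n<d^2$, and here I expect the real difficulty, because now $\floor{n/d}+d$ can be as small as $d+1$. If $n=d^2-1=(d-1)(d+1)$, Corollary~\ref{cor:factoring} gives $\l(n)\le d+1\le 2d-1=q+d$. Otherwise I would lower $d$ by one: writing $n=q(d-1)+(q+r)$ with $0\le q+r\le 2(d-1)$, and noting that $q+r=2(d-1)$ can occur only when $q=r=d-1$, i.e.\ when $n=d^2-1$, one gets $\floor{n/(d-1)}\le q+1$, hence $\floor{n/(d-1)}+(d-1)\le q+d$, so it suffices to prove the bound for $(n,d-1)$. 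Iterating, $d$ strictly decreases, and since the present case forces $\floor{n/d}<d$ it cannot persist as $d$ shrinks; thus the descent terminates in one of the cases already settled (or in the base case $d=1$). The two places needing care are the verification of $1\le i<j\le m-2$ in the main construction across all admissible residues $r$, and checking that this final descent indeed covers every remaining small or degenerate $(n,d)$.
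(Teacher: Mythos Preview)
Your proof is correct and follows essentially the same route as the paper's: induction on $d$, the poset $Q_{i,j,m}$ for the generic residue class, and a descent to $d-1$ in the boundary range. The only cosmetic differences are that the paper writes $n=qd-r$ rather than $n=qd+r$, dispatches the divisible case in one line via the ordinal-sum bound $\l(qd)\le\l(q)+\l(d)$ from Lemma~\ref{lem:basic operations} (so no separate treatment of $q=d$ is needed), and handles the descent's edge by checking $d\in\{2,3\}$ directly rather than singling out $n=d^2-1$.
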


\begin{proof}
This is proved by induction on $d$, where the case $d=1$ follows from equation~\eqref{eqn:initial bound}.

Now suppose that $d \ge 2$ and that the result holds for all $d' \in [1, d)$.  The integer $n$ can be written as $n = qd - r$, where $r \in [0, d-1]$.  If $r \ge 1$ and $q+r -2 \ge d$, then $Q_{r, d, q+r}$ is a poset having $n$ linear extensions and size
$$q+r \le \floor{n/d}+1 + (d-1) = \floor{n/d} + d.$$
Thus it remains to consider when $r = 0$ or $q+r-2 < d$.

If $r = 0$, then $n = qd$ and Lemma~\ref{lem:basic operations} implies that
$$\l(n) \le \l(q) + \l(d) \le q + d = \floor{n/d} + d.$$
This leaves the case when $r \in [1, d-1]$ and $q + r - 1 \le d$.  The few cases that remain when $d \in \{2,3\}$ are easy to check (in fact, they concern only $n \le 12$, and so appear in Table~\ref{table:examples}).  For the conclusion of the argument, suppose $d \ge 4$.

Rewrite $n$ as $n = q'(d-1) + r'$ where $r' \in [0, d-2]$.  Because $n = q(d-1) + q + r$, the restrictions on $q$, $r$, and $d$ imply that there is at most one extra factor of $d-1$ in $q+r$.  That is, $q' \in \{q,q+1\}$.  From the induction hypothesis for $d' = d-1$, it follows that $\l(n) \le q' + d - 1 \le q+d$, which completes the proof.
\end{proof}

Although the bound in Proposition~\ref{prop:linear bound} is linear, the fact that it holds for all integers $d \ge 1$ indicates that it can be improved further.

\begin{thm}\label{thm:main}
For all $n \ge 1$, $\l(n) \le 2\sqrt{n}$.
\end{thm}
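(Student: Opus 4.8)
The plan is to optimize the free parameter $d$ in Proposition~\ref{prop:linear bound} by taking it near $\sqrt{n}$, and then to patch the single residual case where this is not sharp enough by appealing to Corollary~\ref{cor:factoring}. Write $k = \floor{\sqrt{n}}$, so that $k \ge 1$ and $k^2 \le n \le k^2 + 2k$. I would split the range of $n$ into the three pieces $k^2 \le n \le k^2+k-1$, the single value $n = k^2+k$, and $k^2+k+1 \le n \le k^2+2k = (k+1)^2-1$, and treat each one separately; since $k = \floor{\sqrt n}$ this is an exhaustive case analysis.

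For $n$ in the bottom range, apply Proposition~\ref{prop:linear bound} with $d = k$: here $k \le n/k < k+1$, so $\floor{n/k} = k$ and $\l(n) \le 2k \le 2\sqrt{n}$. For $n$ in the top range, apply Proposition~\ref{prop:linear bound} with $d = k+1$: here $k < n/(k+1) < k+1$, so $\floor{n/(k+1)} = k$ and $\l(n) \le 2k+1$; and because $n \ge k^2+k+1$ gives $4n \ge 4k^2+4k+4 > (2k+1)^2$, we have $2k+1 < 2\sqrt{n}$, as required.

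That leaves $n = k^2+k = k(k+1)$, which is the only genuinely different case: the linear estimate of Proposition~\ref{prop:linear bound} gives only $\l(n) \le 2k+1$, and in fact $2k+1 > 2\sqrt{k(k+1)}$, so it does not suffice. Instead I would use Corollary~\ref{cor:factoring} with the factorization $n = k \cdot (k+1)$ (legitimate since $k < k+1$), which yields $\l(n) \le k+1$, and then $k+1 \le 2k \le 2\sqrt{k(k+1)} = 2\sqrt{n}$ completes this case and hence the theorem. The main obstacle is simply recognizing that this middle value $n = k(k+1)$ falls outside the reach of the linear bound and must instead be handled by the multiplicative construction behind Corollary~\ref{cor:factoring}; once that is noticed, all three cases reduce to elementary boundary arithmetic.
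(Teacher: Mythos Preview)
Your argument is correct. The paper takes a slightly different route: it always applies Proposition~\ref{prop:linear bound} with the single choice $d=\lceil\sqrt{n}\rceil$ and then performs a case analysis on $\varepsilon=\lceil\sqrt{n}\rceil-\sqrt{n}$ (the cases $\varepsilon=0$, $\varepsilon\in(0,\tfrac12]$, $\varepsilon\in(\tfrac12,1)$) to argue that $\lfloor n/d\rfloor\le\sqrt{n}-\varepsilon$. Your decomposition by the position of $n$ relative to $k^2+k$ with $k=\lfloor\sqrt{n}\rfloor$, together with the adaptive choice $d=k$ or $d=k+1$, is equivalent in spirit but arithmetically more transparent. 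The substantive difference is your isolated treatment of $n=k(k+1)$ via Corollary~\ref{cor:factoring}: the paper does not invoke that corollary here and relies solely on the linear bound. In fact your observation that Proposition~\ref{prop:linear bound} by itself yields only $2k+1>2\sqrt{k(k+1)}$ at this value is well taken; the paper's displayed strict inequality between floors in the $\varepsilon\in(\tfrac12,1)$ case fails precisely when $n=k(k+1)$ (there $\lfloor n/\lceil\sqrt{n}\rceil\rfloor=k=\lfloor\sqrt{n}\rfloor$, not $k-1$), so your appeal to Corollary~\ref{cor:factoring} is exactly the patch that case needs.
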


\begin{proof}
Apply Proposition~\ref{prop:linear bound} with $d = \lceil \sqrt{n} \rceil$ and $\varepsilon = \ceil{\sqrt{n}} - \sqrt{n}$, where $\varepsilon \in [0,1)$:
\begin{equation}\label{eqn:floor/ceiling}
\l(n) \le \floor{\frac{n}{\ceil{\sqrt{n}}}} + \ceil{\sqrt{n}} = \floor{\sqrt{n} - \varepsilon + \frac{\varepsilon^2}{\sqrt{n} + \varepsilon}} + \sqrt{n} + \varepsilon.
\end{equation}

If $\varepsilon = 0$, then $d = \sqrt{n}$, and the theorem holds.  If $\varepsilon \in (0,.5]$, then $\varepsilon - 1 \le -\varepsilon$, and
$$\floor{\sqrt{n} - \varepsilon + \frac{\varepsilon^2}{\sqrt{n} + \varepsilon}} \le \floor{\sqrt{n}} = \sqrt{n} + \varepsilon - 1 \le \sqrt{n} - \varepsilon.$$
On the other hand, if $\varepsilon \in (.5,1)$, then $\varepsilon - 2 < -\varepsilon$, and
$$\floor{\sqrt{n} - \varepsilon + \frac{\varepsilon^2}{\sqrt{n} + \varepsilon}} < \floor{\sqrt{n} - \varepsilon + \frac{1}{2}} \le \floor{\sqrt{n}}.$$
In other words, if $\varepsilon \in (.5, 1)$, then
$$\floor{\sqrt{n} - \varepsilon + \frac{\varepsilon^2}{\sqrt{n} + \varepsilon}} \le \floor{\sqrt{n}} - 1 = \sqrt{n} + \varepsilon - 2 < \sqrt{n} - \varepsilon.$$

Therefore, for any $\varepsilon \in [0,1)$, it follows from inequality~\eqref{eqn:floor/ceiling} that $\l(n) \le 2\sqrt{n}$.
\end{proof}

\section{A special case}\label{section:special cases}

As suggested in Corollary~\ref{cor:factoring}, the number $\l(n)$ is influenced by the factorization of $n$.  In particular, primality of $n$ can be a challenge for the function $\l$.  On the other hand, if $n$ factors in a particular way, then the bound on $\l(n)$ can be further tightened along the lines of Corollary~\ref{cor:factoring}.

\begin{cor}
If $n = ab$ for $a,b \in \mathbb{Z}^+$ with $2\sqrt{b} < a \le b$, then $\l(n) \le \sqrt{n}$.
\end{cor}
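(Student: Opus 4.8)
The plan is to mimic the construction in the proof of Corollary~\ref{cor:factoring}, but to build from a minimal realization of $b$ (rather than of $a$), using Theorem~\ref{thm:main} to guarantee that the chain length appearing in the construction is nonnegative.

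First I would let $P$ be a poset with $|P| = \l(b)$ and $\e(P) = b$. By Theorem~\ref{thm:main}, $|P| = \l(b) \le 2\sqrt{b}$, and the hypothesis $2\sqrt{b} < a$ then forces $|P| \le a-1$, so that $\ell := a - 1 - |P|$ is a nonnegative integer. Set $Q = (P \oplus \C_\ell) + \C_1$. Then $|Q| = |P| + \ell + 1 = a$, and Proposition~\ref{prop:factoring} gives $\e(Q) = (|P| + \ell + 1)\e(P) = ab = n$. Hence $\l(n) \le a$.

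It then remains only to compare $a$ with $\sqrt{n}$: since $a \le b$, we have $a^2 \le ab = n$, so $a \le \sqrt{n}$, and therefore $\l(n) \le \sqrt{n}$.

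This argument has essentially no obstacle; the only point requiring any care is verifying $\ell \ge 0$, which is exactly where the hypothesis $2\sqrt{b} < a$ enters (through Theorem~\ref{thm:main}), while the inequality $a \le \sqrt{n}$ is immediate from $a \le b$. One should also observe that the degenerate case $a = b$ is permitted and causes no trouble: there $n = b^2$, the bound reads $\l(b^2) \le b = \sqrt{n}$, and the same poset $Q$ works verbatim.
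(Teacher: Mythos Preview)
Your proof is correct and is essentially identical to the paper's own argument: the paper likewise takes a minimal poset $P$ realizing $b$, invokes Theorem~\ref{thm:main} to ensure $|P| \le 2\sqrt{b} < a$ so that $\ell = a-1-|P| \ge 0$, forms $Q = (P \oplus \C_\ell) + \C_1$, and concludes $\l(n) \le a \le \sqrt{n}$ from $a \le b$.
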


\begin{proof}
Suppose that $n = ab$, where $1 \le a \le b < (a/2)^2$.  Construct a poset $P$ with $e(P) = b$ and $|P| = \l(b) \le 2\sqrt{b} < a$.  Let $Q = (P \oplus \C_{a-1-|P|}) + \C_1$.  Note that $e(Q) = ab = n$ and $|Q| = a$.  Since $n = ab$ and $a \le b$, this implies that $|Q| \le \sqrt{n}$, and so $\l(n) \le \sqrt{n}$.
\end{proof}


\begin{thebibliography}{9}

\bibitem{rt} K.~Ragnarsson and B.~E.~Tenner, Obtainable sizes of finite topologies, to appear in \textit{J.~Combin.~Theory, Series A}.

\bibitem{oeis}  N.~J.~A.~Sloane, The on-line encyclopedia of integer sequences, published electronically at \hfill \phantom{*} {\tt http:/$\!\!$/www.research.att.com/\~{}njas/sequences/}.

\bibitem{ec1} R.~P.~Stanley, \textit{Enumerative Combinatorics, vol.~1}, Cambridge Studies in Advanced Mathematics, no.~49, Cambridge University Press, Cambridge, 1997.

\bibitem{trotter} W.~T.~Trotter, \textit{Combinatorics and Partially Ordered Sets: Dimension Theory}, Johns Hopkins University Press, Baltimore, 1992.

\end{thebibliography}
\end{document}